\documentclass[11pt,reqno]{amsart}
\setlength{\hoffset}{-.5in}
\setlength{\voffset}{.25in}
\usepackage{amssymb,latexsym}
\usepackage{graphicx}
\usepackage{fancyhdr,amssymb}
\usepackage{url}	
\usepackage{amsthm}

\textwidth=6.175in
\textheight=8.5in

\theoremstyle{plain}
\numberwithin{equation}{section}
\newtheorem{thm}{Theorem}[section]
\newtheorem{theorem}[thm]{Theorem}
\newtheorem{lemma}[thm]{Lemma}

\pagestyle{fancy}

\begin{document}
\fancyhead{}
\renewcommand{\headrulewidth}{0pt}
\fancyfoot{}
\fancyfoot[LE,RO]{\medskip \thepage}
\

\setcounter{page}{1}

\title[A new proof of Vantieghem's theorem.]{A new proof of Vantieghem's theorem.}
\author{Konstantinos Gaitanas}
\address{National Technical University of Athens,  School of Applied Mathematical and Physical Sciences}
\email{raffako@hotmail.com}

\begin{abstract}
We present a new proof of a primality criterion first proved by Emmanuel Vantieghem.
\end{abstract}

\maketitle

\section{Introduction}
E. Vantieghem has proved\cite{Van} that  $p>2$ is prime if and only if $\displaystyle \prod\limits_ {n=1}^{p-1}(b^n+1)\equiv 1 \pmod{\frac{b^p-1}{b-1}}$.\\

His proof was based on the following lemma proved also by him.   
\begin{lemma}
 \textbf{(Vantieghem)} Let $m$ be a natural number greater than $1$ and let $\Phi_m(X)$ be the $m^{th}$ cyclotomic polynomial.Then \\
\begin{center}$\displaystyle \prod_{\substack{1 \le d \le m, \\(d,m)=1}}(X-Y^d)\equiv\Phi_m(X)\pmod{\Phi_m(Y)}$ in $\displaystyle \mathbb Z[X,Y]$
\end{center}
\end{lemma}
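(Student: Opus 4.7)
The plan is to reinterpret the asserted congruence as an equality in the quotient ring $R[X]$, where $R = \mathbb{Z}[Y]/(\Phi_m(Y))$: two polynomials in $\mathbb{Z}[X,Y]$ differ by an element of the ideal $(\Phi_m(Y))$ if and only if they have equal images in $R[X]$. This reduction converts the two-variable congruence into a one-variable polynomial identity over a ring in which we have much more structure.

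Next I would identify $R$ with $\mathbb{Z}[\zeta_m]$, where $\zeta_m$ is a fixed primitive $m$-th root of unity. The evaluation map $\mathbb{Z}[Y]\to\mathbb{Z}[\zeta_m]$, $Y\mapsto\zeta_m$, is surjective, and its kernel is exactly $(\Phi_m(Y))$: one inclusion follows from $\Phi_m(\zeta_m)=0$, and the other follows by dividing any $f\in\mathbb{Z}[Y]$ with $f(\zeta_m)=0$ by the monic polynomial $\Phi_m(Y)$ in $\mathbb{Z}[Y]$ and observing that the remainder, which has degree less than $\varphi(m)$ and also vanishes at $\zeta_m$, must be the zero polynomial. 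This yields a ring isomorphism $R\cong\mathbb{Z}[\zeta_m]$ under which $Y$ corresponds to $\zeta_m$ and hence $Y^d$ to $\zeta_m^d$.

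Finally, I would invoke the fact that, as $d$ ranges over the integers $1\le d\le m$ with $(d,m)=1$, the powers $\zeta_m^d$ range bijectively over the set of all primitive $m$-th roots of unity. Therefore the image in $\mathbb{Z}[\zeta_m][X]$ of the left-hand side of the asserted congruence is
\[
\prod_{\substack{1\le d\le m\\(d,m)=1}}(X-\zeta_m^d)\;=\;\Phi_m(X),
\]
by the very definition of the cyclotomic polynomial. This matches the image of the right-hand side $\Phi_m(X)$, and pulling the equality back through the isomorphism $R[X]\cong\mathbb{Z}[\zeta_m][X]$ delivers the desired congruence in $\mathbb{Z}[X,Y]$.

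I do not foresee a genuine obstacle: the argument is structural rather than computational. The one technical point deserving attention is the identification of the kernel of $Y\mapsto\zeta_m$ with $(\Phi_m(Y))$, which relies crucially on $\Phi_m$ being \emph{monic} in $\mathbb{Z}[Y]$ so that Euclidean division is available over $\mathbb{Z}$, not merely over $\mathbb{Q}$; everything else is a direct unpacking of definitions.
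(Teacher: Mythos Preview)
Your argument is correct. Passing to the quotient $R=\mathbb{Z}[Y]/(\Phi_m(Y))\cong\mathbb{Z}[\zeta_m]$ and then recognising $\prod_{(d,m)=1}(X-\zeta_m^d)$ as the factored form of $\Phi_m(X)$ is exactly the right move, and you have correctly flagged the one delicate point, namely that monicity of $\Phi_m$ is what makes the kernel identification work over $\mathbb{Z}$ rather than only over $\mathbb{Q}$.

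However, there is nothing to compare against: the paper does \emph{not} prove this lemma. It is quoted, with attribution to Vantieghem, as the tool underlying Vantieghem's original proof of the primality criterion, and the entire point of the paper is to give a new proof of that criterion which \emph{avoids} cyclotomic polynomials altogether. The paper's own contribution is a direct combinatorial argument (partitioning $\{1,\dots,p-1\}$ into cosets of the subgroup generated by $2$ in $(\mathbb{Z}/p\mathbb{Z})^{\times}$ and telescoping each block) rather than any treatment of Lemma~1.1. So your proof stands on its own; it simply addresses a statement the paper chose to cite rather than reprove.
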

We will prove the \emph{if} case of Vantieghem's theorem without the use of cyclotomic polynomials.Our proof requires only Fermat's Little theorem and some basic facts from the theory of congruences.
\section{main theorem}

\begin{theorem}
Let $b$ be a natural number with $2\leq b\leq p-1$.Then  if $p>2$ is prime $$\displaystyle \prod\limits_ {n=1}^{p-1}(b^n+1)\equiv 1 \pmod{\frac{b^p-1}{b-1}}\quad (1)$$.\\
\end{theorem}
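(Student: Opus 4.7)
\medskip

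Let $N=\dfrac{b^p-1}{b-1}=1+b+b^2+\cdots+b^{p-1}$, so that $b^p\equiv 1\pmod N$. My plan is to rewrite the target product so that it nearly cancels with a closely related product, and then show that the common factor is invertible modulo $N$.

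First I would introduce
$$P=\prod_{n=1}^{p-1}(b^n+1), \qquad Q=\prod_{n=1}^{p-1}(b^n-1),$$
and form
$$PQ=\prod_{n=1}^{p-1}(b^{2n}-1).$$
Since $p$ is an odd prime, multiplication by $2$ is a bijection on $\{1,2,\ldots,p-1\}$ modulo $p$, and since $b^p\equiv 1\pmod N$ we have $b^{2n}\equiv b^{2n \bmod p}\pmod N$. Consequently the multiset $\{b^{2n}\bmod N:1\le n\le p-1\}$ coincides with $\{b^{m}\bmod N:1\le m\le p-1\}$, which yields
$$PQ\equiv\prod_{m=1}^{p-1}(b^m-1)=Q\pmod N.$$
If $Q$ is invertible modulo $N$, then I can cancel it and conclude $P\equiv 1\pmod N$.

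The main obstacle is therefore establishing $\gcd(Q,N)=1$. For each $n$ with $1\le n\le p-1$, the standard identity $\gcd(b^n-1,b^p-1)=b^{\gcd(n,p)}-1=b-1$, combined with $b^p-1=(b-1)N$, gives $\gcd(b^n-1,(b-1)N)=b-1$, hence $\gcd(b^n-1,N)$ divides $\gcd(b-1,N)$. Since $N\equiv p\pmod{b-1}$, the latter gcd divides $\gcd(b-1,p)$. This is exactly where the hypothesis $2\le b\le p-1$ is used: it forces $1\le b-1\le p-2<p$, so $\gcd(b-1,p)=1$, and thus $\gcd(b^n-1,N)=1$ for every $n=1,\ldots,p-1$.

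Putting the pieces together, $Q$ is a unit modulo $N$, the congruence $PQ\equiv Q\pmod N$ can be cancelled, and $P\equiv 1\pmod N$, which is the asserted congruence $(1)$. The only nontrivial step is the gcd computation, and it rests on nothing more than the classical formula $\gcd(b^n-1,b^m-1)=b^{\gcd(n,m)}-1$ together with the bound on $b$; in particular, no cyclotomic polynomials appear.
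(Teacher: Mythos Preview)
Your argument is correct, and it follows a genuinely different route from the paper's proof. The paper lets $r$ be the multiplicative order of $2$ modulo $p$, partitions $\{1,\dots,p-1\}$ into the cosets $A_i=\{a_i,2a_i,\dots,2^{r-1}a_i\}$ of the subgroup $\langle 2\rangle\subset(\mathbb{Z}/p\mathbb{Z})^{\times}$, and then on each coset invokes the telescoping identity $(x+1)(x^{2}+1)\cdots(x^{2^{r-1}}+1)=(x^{2^{r}}-1)/(x-1)$ with $x=b^{a_i}$, reducing each block to $1$ modulo $N$ via $2^{r}\equiv 1\pmod p$. Your approach instead treats the whole product at once: you multiply by the auxiliary product $Q=\prod_{n}(b^{n}-1)$, use only the much weaker fact that $n\mapsto 2n$ is a bijection on $(\mathbb{Z}/p\mathbb{Z})^{\times}$ (no Fermat, no orders), and then cancel $Q$.

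What each buys: the paper's decomposition is self-contained in the sense that it never forms $Q$ and never explicitly needs to invert anything---although its final step ``$(b^{a_i\cdot 2^{r}}-1)/(b^{a_i}-1)\equiv 1\pmod N$'' tacitly relies on $\gcd(b^{a_i}-1,N)=1$, which is precisely the fact you isolate and prove. Your version is shorter, avoids the case split on $r$, makes the role of the hypothesis $2\le b\le p-1$ completely transparent (it is exactly what forces $\gcd(b-1,p)=1$), and rests on the single classical identity $\gcd(b^{n}-1,b^{p}-1)=b^{\gcd(n,p)}-1$.
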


\begin{proof}
Let $p$ be an odd prime , $r$ be the order of $2$ mod $p$ and $P=\{1,2,\ldots,p-1\}$.\\
 We will split the proof into two cases for the convience of the reader.\\
\\
\textbf{Case 1.} $r=p-1$. \\
\\
This means for every $n\in P$, $n\equiv 2^m\pmod{p},0\leq m\leq p-1$.\\
It is easy to see that if $n\equiv 2^m\pmod{p}\Rightarrow b^n+1\equiv b^{2^m}+1\pmod{\frac{b^p-1}{b-1}}$\\
We can see that after rearranging the factors in the left hand side of (1) we get 
$$\displaystyle \prod\limits_ {n=1}^{p-1}(b^n+1)\equiv \displaystyle \prod\limits_ {m=1}^{p-1}(b^{2^m}+1)\equiv(b^{1}+1)\cdot (b^{2^1}+1)\cdots (b^{2^{p-2}}+1)\equiv \frac{b^{2^{p-1}}-1}{b-1}\pmod{\frac{b^p-1}{b-1}} $$.\\
From Fermat's Little theorem we know that $2^{p-1}\equiv 1\pmod{p}\Rightarrow b^{2^{p-1}}\equiv b\pmod{\frac{b^p-1}{b-1}}\Rightarrow$\\
$$\frac{b^{2^{p-1}}-1}{b-1}\equiv1\pmod{\frac{b^p-1}{b-1}}$$\\
This means $\displaystyle \prod\limits_ {n=1}^{p-1}(b^n+1)\equiv 1\pmod{\frac{b^p-1}{b-1}}$ and the first case is proved.\\
\\
\\
 \textbf{Case 2.}$r<p-1$.\\
\\
This means that the numbers $1,2^1,\ldots, 2^{r-1}$ are incogruent $\pmod{p}$ and from Fermat's little theorem we know that $r\mid{p-1}$.\\
We will split the set $P=\{1,2,\ldots,p-1\}$ into $k=\frac{p-1}{r}$ subsets in the following way:\\
  
Let $A_1=\{1,2^1,\ldots ,2^{r-1}\}$be the first set and $a_i\in P$ be the smallest integer that is not contained in any of the sets $A_1,\ldots,A_{i-1}$.\\
Then $A_i=\{ a_i\cdot 1,a_i\cdot 2^1,\ldots ,a_i\cdot 2^{r-1}\}$. \\
\\
We shall prove that if the elements of the subsets are reduced modulo $p$ then\\
 $A_1\cup A_2\ldots \cup A_k=P$ and it suffices to prove that all the elements of the sets are pairwise incogruent modulo $p$.\\

If two elements belong in the same set $A_i$, suppose that  $a_i\cdot 2^m\equiv a_i\cdot 2^n \pmod{p}$ with $n<m$.\\
 Since  $p\nmid a_i$ we obtain $ 2^{n}\equiv 2^m\pmod{p}$ which leads to a contradiction since by definition the numbers $1,2,\ldots,2^{r-1}$ are all incogruent modulo $p$.\\
\\
We consider now the case when two elements belong to different sets.\\
\\
Suppose that $a_j\cdot 2^m\equiv a_i\cdot 2^n \pmod{p}$ , $1\leq m,n\leq r-1$ and without loss of generality  $i< j$.\\
Multiplying both sides with $2^{r-m}$ yields $a_j\cdot 2^r\equiv a_i\cdot 2^{r+n-m} \pmod{p}\Rightarrow a_j\equiv a_i\cdot 2^{r+n-m} \pmod{p}$. \\
 But this means that $a_j\in A_i=\{a_i\cdot 1,\ldots,a_i\cdot 2^{r-1}\}$ ,which is a contradiction since $a_j$ is by definition the smallest integer not belonging in any of the sets $A_1,\ldots,A_i,\ldots,A_{j-1}$. \\  
This means that every natural number not greater than $p-1$ is an element in its reduced form in exactly one of the sets $A_i$, $1\leq i\leq k$, which yields   $A_1\cup A_2\ldots \cup A_k=P$.\\
\\
This means for every $n\in P$, $n\equiv a_i\cdot 2^m\pmod{p},0\leq m\leq r-1$.\\  
So, $b^n\equiv b^{{a_i}\cdot 2^m}\pmod{\frac{b^p-1}{b-1}}$ and we can obtain that
$$\prod\limits_ {n=1}^{p-1}(b^n+1)\equiv \prod\limits_{i=1}^{\frac{p-1}{r}}\cdot \prod\limits_{m=0}^{r-1}(b^{{a_i}\cdot 2^m}+1)\pmod{\frac{b^p-1}{b-1}}$$
		
But we can see that $$\prod\limits_{m=0}^{r-1}(b^{a_i\cdot 2^m}+1)=((b^{a_i})^1+1)((b^{a_i})^{2^1}+1)\cdots ((b^{a_i})^{2^{r-1}}+1)=\frac{(b^{a_i})^{2^r}-1}{b^{a_i}-1}$$.\\
 Since $2^r\equiv 1\pmod{p}$ and $p\nmid a_i$ ,  $(b^{a_i})^{2^r}-1\equiv b^{a_i}-1\pmod{\frac{b^p-1}{b-1}}\Rightarrow\frac{(b^{a_i})^{2^r}-1}{b^{a_i}-1}\equiv1\pmod{\frac{b^p-1}{b-1}}$.\\
This means  $\prod\limits_{m=0}^{r-1}(b^{a_i\cdot 2^m}+1)\equiv1\pmod{\frac{b^p-1}{b-1}}$ and we can obtain immediatelly:\\

$$\prod\limits_ {n=1}^{p-1}(b^n+1)\equiv \prod\limits_{i=1}^{\frac{p-1}{r}} 1\equiv1^{\frac{p-1}{r}}\equiv 1 \pmod{\frac{b^p-1}{b-1}}$$
\begin{center}
This completes the proof.
\end{center}
\end{proof}
\section{numerical examples}
Let $p=89$ and $b=2$. The order of $2$ modulo $89$ is $r=11$. \\
 \begin{center}
The subsets from our proof are
\end{center}
\begin{center}$A_1 =\{1,2,4,8,16,32,64,39,78,67,45\}$ \\
  $A_2 =\{3,6,12,24,48,7,14,28,56,23,46\}$ \\
\end{center} 
\begin{center}$A_3 =\{5,10,20,40,80,71,53,17,34,68,47\}$ \\
$A_4=\{9,18,36,72,55,21,42,84,79,69,49\}$ \\
\end{center}
\begin{center} $A_5 =\{11,22,44,88,87,85,81,73,57,25,50\}$\\
$A_6 =\{13,26,52,15,30,60,31,62,35,70,51\}$\\
\end{center}
\begin{center} $A_7 =\{19,38,76,63,37,74,59,29,58,27,54\}$\\
$A_8 =\{33,66,43,86,83,77,65,41,82,75,61\}$
\end{center}
The numbers $a_2=3,a_3=5,a_4=9,a_5=11,a_6=13,a_7=19$ and $a_8=33$ are the least natural numbers not greater than $89$ not appearing in any of the previous subsets $A_1,A_2,A_3,A_4,A_5,A_6,A_7$ and $A_8$ respectively.\\
We can verify by brute force that $(2^1+1)(2^2+1)(2^3+1)\cdots (2^{88}+1)\equiv 1\pmod{2^{89}-1}$

\medskip

\noindent MSC2010: 11A07, 11A41

\end{document}